\newtheorem{openproblem}{Open Problem}[section]
\providecommand{\U}[1]{\protect\rule{.1in}{.1in}}
\newtheorem{theorem}{Theorem}[section]
\newtheorem{lemma}{Lemma}[section]
\newtheorem{corollary}{Corollary}[section]
\theoremstyle{definition}
\newtheorem{definition}{Definition}[section]
\theoremstyle{remark}
\numberwithin{equation}{section}
\begin{document}
    \begin{frontmatter}

\title{On the Diophantine Equation  $F_n = F_l^k (F_l^m - 1)$
}



\author[1]{Seyran S. Ibrahimov}
\ead{seyran.ibrahimov@emu.edu.tr}
\author[1,2]{Nazim I. Mahmudov}
\ead{nazim.mahmudov@emu.edu.tr}

\address [1] {Department of Mathematics, Eastern Mediterranean University, Mersin 10, 99628, T.R. North Cyprus, Turkey}
\address [2] {Research Center of Econophysics, Azerbaijan State University of Economics (UNEC), Istiqlaliyyat Str. 6, Baku 1001,Azerbaijan}



\begin{abstract}
The sequence \(\{ F_t \}_{t=0}^{\infty}\) represents the Fibonacci numbers, defined by the recurrence relation \(F_0 = 0\), \(F_1 = 1\), and \(F_t = F_{t-1} + F_{t-2}\) for \(t \geq 2\). In this paper, we prove that \((n, l, k, m) = (6, 3, 3, 1)\) is the only solution to the Diophantine equation \(F_n = F_l^k (F_l^m - 1)\), where \(n, l, m \geq 1\) and \(k \geq 3\). To solve this problem, we apply Matveev's theorem, which provides lower bounds for linear forms in logarithms of algebraic numbers, in combination with a modified Baker-Davenport reduction technique and a divisibility property of Fibonacci numbers.
\end{abstract}

\begin{keyword}
Diophantine equations; Baker-Davenport reduction technique; Matveev's theorem; Fibonacci numbers
\end{keyword}

\end{frontmatter}

\section{INTRODUCTION}
In this research, we analyze the Diophantine equation
 \begin{equation}\label{1.1}
F_n = F_l^k (F_l^m - 1)
\end{equation}
where $n$, $l$, $m \geq 1$ and $k \geq 3$.
First, we review some related Diophantine problems that have been explored in the literature. For instance, by selecting \( m = 1 \) and \( l = 3 \) in equation \eqref{1.1}, we get \( F_n = 2^k \), while choosing \( k = 1 \) and \( l = 3 \) results in \( F_n = 2^{m+1} - 2 \), both of which are particular cases of Theorem 1 in Ddamulira et al.'s work in \cite{7}. Additionally, by setting $l = 3$,  $k = p - 1$, and $ m = p$ in the Diophantine equation (\ref{1.1}), and assuming that both $p$ and $2^{p} - 1$ are prime numbers, we arrive at the following equation:

\begin{equation}\label{1.2}
F_{n}=2^{p-1}(2^{p}-1)
\end{equation}
The solutions to equation (1.2) lead to even perfect numbers within the Fibonacci sequence. However, Luca, in \cite{12}, proved that there are no perfect numbers in the Fibonacci sequence.
Moreover, Facó and Marques \cite{10} explored a version of equation \eqref{1.2} in which the left-hand side involves generalized Fibonacci numbers. 
Generally, in recent decades, mathematicians have intensively explored exponential Diophantine equations that involve terms from second-order linear recurrence sequences. For instance, in \cite{4}, Bravo and Luca investigated the Diophantine equation \( F_n + F_m = 2^a \), which generalizes the problem \( F_n = 2^k \). Moreover, Luca and Patel, in \cite{13}, demonstrated that all solutions of the Diophantine equation $F_n \pm F_m = y^p$ in integers $(n, m, y, p)$ with $p \geq 2$ and $n \equiv m \pmod{2}$ either satisfy $\max \{|n|, |m|\} \leq 36$, or $y = 0$ and $|n| = |m|$. Nevertheless, the problem remains unsolved for the case where \( n \not\equiv m \pmod{2} \).

 Research has also explored the equation $F_n - F_m = y^a$ when $y$ is fixed. Namely, Şiar and Keskin \cite{18} identified all solutions for $y = 2$, Bitim and Keskin \cite{2} determined all solutions for $y = 3$, and Erduvan and Keskin \cite{9} found all solutions for $y = 5$. In the same study \cite{9}, the authors conjectured that there are no solutions to the equation $F_n - F_m = y^a$ when $y$ is a prime number $>7$.
 Also, Luca and Szalay \cite{15} and Luca and  Stănică \cite{14} demonstrated that the Diophantine equations $F_n = p^a \pm p^b + 1$ and $F_n = p^a \pm p^b$, respectively, each have only a finite number of positive integer solutions \((n, p, a, b)\) where $n \geq 3$ and $a \geq \max\{2, b\}$, with $p$ being an indeterminate prime number.
 
In the proof of the main result, we use Matveev's theorem to obtain an upper bound for the variables $n$ and $m$ in the Diophantine equation (1.1). Then, to simplify these bounds, we use a modified form of Baker and Davenport's reduction lemma, along with a specific divisibility property of Fibonacci numbers, which helps us derive bounds for $l$ and $k$. We also use Python for some of the calculations in this article.

\section{Preliminary Lemmas}

\begin{lemma}\cite{11}
The nth term of the Fibonacci sequence can be found using the Binet formula as shown below:
\begin{align*}
F_{n}=\frac{\phi^{n}-(-\phi)^{-n} }{\sqrt5}, \quad n\geq0
\end{align*}
where
\begin{align*}
\phi=\frac{1+\sqrt5}{2}.
\end{align*}
\end{lemma}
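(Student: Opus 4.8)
The plan is to prove the Binet formula by the standard linear-recurrence argument, namely by exhibiting a candidate closed form and verifying that it satisfies both the defining recurrence $F_t = F_{t-1} + F_{t-2}$ and the initial conditions $F_0 = 0$, $F_1 = 1$. First I would observe that the characteristic polynomial associated with the recurrence is $x^2 - x - 1 = 0$, whose two roots are $\phi = \frac{1+\sqrt{5}}{2}$ and $\psi = \frac{1-\sqrt{5}}{2}$. A useful preliminary identity is $\psi = -\phi^{-1}$, which follows because the product of the roots equals the constant term, i.e.\ $\phi\psi = -1$. Consequently $\psi^n = (-\phi^{-1})^n = (-\phi)^{-n}$, so the stated formula $F_n = \frac{\phi^n - (-\phi)^{-n}}{\sqrt{5}}$ is precisely the symmetric expression $F_n = \frac{\phi^n - \psi^n}{\sqrt{5}}$, which is the more convenient form to manipulate.

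Next I would set $G_n := \frac{\phi^n - \psi^n}{\sqrt{5}}$ and prove $F_n = G_n$ for all $n \geq 0$ by strong induction. For the base cases, $G_0 = \frac{1-1}{\sqrt{5}} = 0 = F_0$, and $G_1 = \frac{\phi - \psi}{\sqrt{5}} = \frac{\sqrt{5}}{\sqrt{5}} = 1 = F_1$, where I use $\phi - \psi = \sqrt{5}$. For the inductive step, since both $\phi$ and $\psi$ satisfy $x^2 = x+1$, I have $\phi^n = \phi^{n-1} + \phi^{n-2}$ and $\psi^n = \psi^{n-1} + \psi^{n-2}$; subtracting these and dividing by $\sqrt{5}$ yields $G_n = G_{n-1} + G_{n-2}$, so the sequence $\{G_n\}$ obeys the Fibonacci recurrence. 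Assuming $F_{n-1} = G_{n-1}$ and $F_{n-2} = G_{n-2}$, I conclude $F_n = F_{n-1} + F_{n-2} = G_{n-1} + G_{n-2} = G_n$, closing the induction.

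There is essentially no serious obstacle here, since the result is classical; the only point requiring care is the algebraic bookkeeping that converts the symmetric form $\frac{\phi^n - \psi^n}{\sqrt{5}}$ into the stated form $\frac{\phi^n - (-\phi)^{-n}}{\sqrt{5}}$, which rests entirely on the identity $\psi = -1/\phi$. As an alternative I would keep the direct method in reserve: write the general solution of the recurrence as $F_n = A\phi^n + B\psi^n$, then impose the two initial conditions $A + B = 0$ and $A\phi + B\psi = 1$, solving the resulting linear system to obtain $A = \frac{1}{\sqrt{5}}$ and $B = -\frac{1}{\sqrt{5}}$, which recovers the same closed form. Either route completes the proof, and I would present the inductive verification as the cleaner of the two.
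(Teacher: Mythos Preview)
Your proof is correct and follows the classical linear-recurrence argument for Binet's formula; both the inductive verification and the alternative undetermined-coefficients method you outline are valid and complete. Note, however, that the paper does not supply its own proof of this lemma at all---it is stated with a citation to Koshy \cite{11} and used as a known preliminary---so there is no in-paper argument to compare against; your write-up simply fills in what the paper takes for granted.
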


By applying Lemma 2.1, we can directly derive the following inequalities.

\begin{corollary}\cite{11}\label{Golden ratio ineq}
\begin{align*}
\phi^{n-2}\leq F_{n} \leq \phi^{n-1},\quad n\geq1.
\end{align*}
\end{corollary}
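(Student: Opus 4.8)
The plan is to obtain both inequalities as a direct consequence of the Binet formula in Lemma 2.1. Writing $(-\phi)^{-n} = (-1)^n \phi^{-n}$, that formula reads $F_n = (\phi^n - (-1)^n \phi^{-n})/\sqrt5$, and hence for every $n \ge 1$
\[
\frac{\phi^n - \phi^{-n}}{\sqrt5} \;\le\; F_n \;\le\; \frac{\phi^n + \phi^{-n}}{\sqrt5},
\]
with the left-hand inequality an equality when $n$ is even and the right-hand one an equality when $n$ is odd. It then suffices to compare the two outer quantities with $\phi^{n-2}$ and $\phi^{n-1}$ respectively.

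For the upper estimate I would divide the target inequality $\phi^n + \phi^{-n} \le \sqrt5\,\phi^{n-1}$ by $\phi^{n-1}$ to reduce it to $\phi + \phi^{1-2n} \le \sqrt5$; invoking the elementary identities $\phi^2 = \phi+1$ and $\sqrt5 = 2\phi - 1$ (so that $\sqrt5 - \phi = \phi - 1 = \phi^{-1}$), this becomes $\phi^{1-2n} \le \phi^{-1}$, which holds for all $n \ge 1$ because $1 - 2n \le -1$. For the lower estimate I would analogously divide $\phi^n - \phi^{-n} \ge \sqrt5\,\phi^{n-2}$ by $\phi^{n-2}$ to get $\phi^2 - \phi^{2-2n} \ge \sqrt5$, i.e. $\phi^{2-2n} \le \phi^2 - \sqrt5 = 2 - \phi = \phi^{-2}$, which holds for all $n \ge 2$ since $2 - 2n \le -2$; the single leftover case $n = 1$ is dispatched by the direct check $F_1 = 1 \ge \phi^{-1}$.

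A completely elementary alternative, which avoids the Binet formula altogether, is induction on $n$ with two base cases: $F_1 = F_2 = 1$ satisfy $\phi^{-1} \le 1 \le \phi^{0}$ and $\phi^{0} \le 1 \le \phi^{1}$, and for $n \ge 3$ one adds the inductive hypotheses for $n-1$ and $n-2$ and uses $\phi^{n-2} + \phi^{n-3} = \phi^{n-3}(\phi+1) = \phi^{n-1}$ together with the same identity shifted down by one power. In either approach there is no genuine difficulty; the only thing one has to watch is the behaviour at the smallest indices, where the bounds are attained (equality at $n = 1$ for the upper bound and at $n = 2$ for the lower bound), so the base cases and the directions of the estimates must be lined up so that these tight cases are covered.
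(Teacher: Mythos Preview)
Your proof is correct and follows exactly the approach the paper indicates: the paper does not give a detailed argument but simply states that the inequalities follow directly from Lemma~2.1 (Binet's formula), and your first argument carries out precisely that derivation, with the algebraic identities $\sqrt5 = 2\phi-1$, $\phi-1=\phi^{-1}$, and $2-\phi=\phi^{-2}$ making the comparison transparent. The inductive alternative you sketch is also standard and correct, though it goes beyond what the paper does.
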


We will now articulate a specific property concerning the divisibility of Fibonacci numbers.
In this context, for integers $a \geq 2$, $k \geq 0$, and $b \geq 1$, we denote that $a^k$ exactly divides $b$ by writing $a \parallel b$ if $a^k \mid b$ and $ a^{k+1} \nmid b$.

\begin{lemma} \cite{17}
Let \( k, l, n \) be positive integers and \( k \geq 2 \) and \( l \geq 3\) . Then the following statements hold:

\begin{enumerate}
  \item \textbf{} If \( F_l^k \mid\mid F_n \) and \( l \not\equiv 3 \pmod{6} \), then \( F_l^{k-1} \mid\mid \frac{n}{l} \).
  \item \textbf{} If \( F_l^k \mid\mid F_n \) and \( l \equiv 3 \pmod{6} \), and \( 2^{k-1} \mid \frac{n}{l} \), then \( F_l^{k-1} \mid\mid \frac{n}{l} \).
  \item \textbf{} If \( F_l^k \mid\mid F_n \) and \( l \equiv 3 \pmod{6} \), and \( 2^{k-1} \nmid \frac{n}{l} \), then \( F_l^{k-2} \mid\mid \frac{n}{l} \).
\end{enumerate}
\end{lemma}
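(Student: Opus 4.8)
The plan is to reduce the exact‑divisibility statement $F_l^k\parallel F_n$ to statements about the $p$‑adic valuations $\nu_p(F_n)$ as $p$ runs over the prime divisors of $F_l$, apply Lengyel's formulas for these valuations, and then read off the exact power of $F_l$ dividing $n/l$. For a positive integer $d$ write $e_d(m)$ for the largest $j$ with $d^j\mid m$, so that $d^j\parallel m$ iff $e_d(m)=j$ and $e_d(m)=\min_{p\mid d}\lfloor\nu_p(m)/\nu_p(d)\rfloor$. First I would record the set‑up: since $k\ge 2$ and $l\ge 3$ we have $F_l\mid F_n$ with $F_l\ge 2$, and from the identity $\gcd(F_a,F_b)=F_{\gcd(a,b)}$ together with the injectivity of $m\mapsto F_m$ on $m\ge 3$ this forces $l\mid n$; hence $t:=n/l$ is a positive integer and all three claims concern $e_{F_l}(t)$.

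The crux is the identity
\begin{equation}\label{eq:vkey}
\nu_p(F_{lt})=\nu_p(F_l)+\nu_p(t),
\end{equation}
which I would establish for every odd prime $p$, and also for $p=2$ whenever $l\not\equiv 3\pmod 6$. For $p=5$ it is immediate from $\nu_5(F_m)=\nu_5(m)$. For $p\ne 2,5$, let $z=z(p)$ be the rank of apparition of $p$ (the least $m\ge 1$ with $p\mid F_m$); since $p\mid F_l$ we have $z\mid l$, and using $p\nmid z$ together with the relation $\nu_p(F_{rm})=\nu_p(F_m)+\nu_p(r)$, valid for all $r\ge1$ whenever $z\mid m$ (itself a consequence of $\gcd(F_a,F_b)=F_{\gcd(a,b)}$ and a lifting‑the‑exponent argument), one obtains $\nu_p(F_m)=\nu_p(F_z)+\nu_p(m)$ for all $m$ with $z\mid m$; applying this with $m=l$ and $m=lt$ gives \eqref{eq:vkey}. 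For $p=2$ note $2\mid F_l$ iff $3\mid l$; when $6\mid l$ one has $\nu_2(F_l)\ge 3$, and a short check of the subcases $l\equiv 6\pmod{12}$ and $12\mid l$ against Lengyel's formula for $\nu_2(F_m)$ again yields \eqref{eq:vkey}, whereas when $l\equiv 3\pmod 6$ (so $\nu_2(F_l)=1$) that formula gives $\nu_2(F_{lt})=1$ if $t$ is odd and $\nu_2(F_{lt})=\nu_2(t)+2$ if $t$ is even — the value $\nu_2(F_l)+1=2$ is skipped, which is exactly why the lemma splits into the three stated cases.

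From \eqref{eq:vkey}, for each prime $p\mid F_l$ for which it holds one has $\lfloor\nu_p(F_n)/\nu_p(F_l)\rfloor=1+\lfloor\nu_p(t)/\nu_p(F_l)\rfloor$. In case (1), $l\not\equiv 3\pmod 6$, relation \eqref{eq:vkey} holds for every prime dividing $F_l$ (if $3\nmid l$ then $F_l$ is odd; if $6\mid l$ the prime $2$ was handled above), so taking the minimum over $p\mid F_l$ gives $k=e_{F_l}(F_n)=1+e_{F_l}(t)$, i.e. $F_l^{k-1}\parallel t$. In cases (2) and (3), $l\equiv 3\pmod 6$ and $\nu_2(F_l)=1$, so $\lfloor\nu_2(F_n)/\nu_2(F_l)\rfloor=\nu_2(F_n)$; since $2^k\mid F_l^k\mid F_n$ this is $\ge k\ge 2$, which forces $t$ even and $\nu_2(F_n)=\nu_2(t)+2$, while every odd $p\mid F_l$ still gives $\lfloor\nu_p(F_n)/\nu_p(F_l)\rfloor\ge k$, hence $\lfloor\nu_p(t)/\nu_p(F_l)\rfloor\ge k-1$. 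If $2^{k-1}\mid t$ (case 2), then $\nu_2(F_n)=\nu_2(t)+2\ge k+1$, so the minimum $k$ is attained only at odd primes, where it reads $\min_{p\text{ odd},\,p\mid F_l}\lfloor\nu_p(t)/\nu_p(F_l)\rfloor=k-1$; combined with $\nu_2(t)\ge k-1$ this gives $e_{F_l}(t)=\min(\nu_2(t),k-1)=k-1$. If $2^{k-1}\nmid t$ (case 3), then $\nu_2(t)\le k-2$ while $\nu_2(F_n)=\nu_2(t)+2\ge k$ forces $\nu_2(t)=k-2$; since each odd $p\mid F_l$ still has $\lfloor\nu_p(t)/\nu_p(F_l)\rfloor\ge k-1>k-2$, we get $e_{F_l}(t)=\min(k-2,k-1)=k-2$, i.e. $F_l^{k-2}\parallel t$.

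I expect the main obstacle to be the valuation identity \eqref{eq:vkey}: for odd $p$ it rests on the law of apparition for the Fibonacci (Lucas) sequence — in particular that $p\nmid z(p)$ for $p\ne 5$ and that multiplying the index by a factor coprime to $p$ does not raise the $p$‑adic valuation — while for $p=2$ the piecewise behaviour of $\nu_2(F_m)$ is precisely what makes \eqref{eq:vkey} break down, and only in the class $l\equiv 3\pmod 6$; the delicate bookkeeping is to confirm that this is the only bad class, so that the three cases in the statement are the natural and exhaustive ones.
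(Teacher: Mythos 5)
This lemma is quoted from Onphaeng--Pongsriiam \cite{17}; the paper itself gives no proof, so there is nothing internal to compare against. Your valuation-theoretic argument is correct and is essentially the standard route (and the one taken in the cited source): reduce $F_l^k\parallel F_n$ to $e_{F_l}(F_n)=\min_{p\mid F_l}\lfloor\nu_p(F_n)/\nu_p(F_l)\rfloor=k$, establish $\nu_p(F_{lt})=\nu_p(F_l)+\nu_p(t)$ for every prime $p\mid F_l$ except $p=2$ in the class $l\equiv 3\pmod 6$, and track the anomalous jump $\nu_2(F_{lt})=\nu_2(t)+2$ there; your case analysis for (1)--(3) is sound. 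One small point to patch: when $l=3$ we have $F_l=2$, so in case (2) there are no odd primes dividing $F_l$ and your step ``the minimum $k$ is attained at an odd prime'' has no content; but then $2^{k-1}\mid t$ forces $\nu_2(F_n)=\nu_2(t)+2\geq k+1$, contradicting $2^k\parallel F_n$, so case (2) is vacuous for $l=3$ and the statement still holds. With that degenerate case noted, the proof is complete.
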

From equation (1.1), we see that \( F_l^k \mid\mid F_n \). Applying Lemma 2.2, we find that for all \( k \geq 2 \) and \( l \geq 3 \), \( F_l^{k-2} \mid \frac{n}{l} \). This leads to the conclusion that:

\begin{equation*}
l\cdot F_l^{k-2}\leq n
\end{equation*}
Hence
\begin{equation}
\log(l)+(k-2)(l-2)\log(\phi)\leq \log(n)
\end{equation}

We will now highlight several fundamental concepts from algebraic number theory.\\
Let $y$ be an algebraic number of degree $d$ and let

\begin{align*}
a_{0}\prod_{i=1}^{d}(X-y^{(i)})\in \mathbb{Z}[X]
\end{align*}
 be the minimal polynomial of $y$.
where $a_{0}> 0$ and $y^{(i)}$, $i=1,2,\dots,d$,  are the conjugates of $y$.

\begin{definition}
The logarithmic height of $y$ is defined by
\begin{align*}
h(y)=\frac{1}{d}\bigg(\log a_{0}+\sum_{i=1}^{d} \log\big(\max\lbrace \vert y^{(i)}\vert,1\rbrace\big)\bigg)
\end{align*}
\end{definition}
We will subsequently present a consequence of Matveev's theorem (\cite{5}, \cite{16}).
\begin{lemma}
Assume that $\beta_{1},\dots, \beta_{n}$ are positive algebraic numbers in a real algebraic number field $\mathbb{L}$ of degree $D$, $r_{1},\dots, r_{n}$ are rational integers, and
\begin{align*}
\Lambda := \beta_{1}^{r_{1}}\dots \beta_{n}^{r_{n}}-1\not=0.
\end{align*}
then
\begin{align} \label{Matveev ineq}
\vert \Lambda\vert > \exp \bigg(-1.4 \cdot 30^{n+3} \cdot n^{4.5} \cdot D^{2}(1+\log D)(1+\log T)A_{1}\dots A_{n}\bigg),
\end{align}
where $T\geq \max\lbrace \vert r_{1}\vert ,\dots, \vert r_{n}\vert\rbrace$, and $A_{j}\geq \max\lbrace Dh(\beta_{j}), \vert \log \beta_{j}\vert, 0.16\rbrace$, for all $j=1,\dots,n$.
\end{lemma}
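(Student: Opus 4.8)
\noindent\emph{Proof proposal.}
The stated inequality is the standard practitioners' packaging of Matveev's estimate, so I would obtain it from the underlying result of Matveev \cite{16} (see also \cite{5}) by specialization followed by a short post-processing step. In its raw form Matveev's theorem bounds from below the \emph{additive} linear form
\[
\Gamma:=r_{1}\log\beta_{1}+\cdots+r_{n}\log\beta_{n}
\]
(for fixed determinations of the logarithms): if $\Gamma\neq 0$, then $\log|\Gamma|>-C(n,\kappa)\,C_{0}\,W_{0}\,D^{2}\,\Omega$, where $\Omega=A_{1}\cdots A_{n}$ with the $A_{j}$ exactly as in the statement, $\kappa\in\{1,2\}$ records whether the field is real or not, $C(n,\kappa)$ depends only on $n$ and $\kappa$, $C_{0}$ is logarithmic in $n$ and $D$, and $W_{0}$ is logarithmic in $D$ and in $B:=\max_{j}|r_{j}|$. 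Since $\mathbb{L}\subset\mathbb{R}$ and the $\beta_{j}$ are positive, we are in the case $\kappa=1$, every $\log\beta_{j}$ is the ordinary real logarithm, and $\Gamma\in\mathbb{R}$.

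First I would discard a trivial range. Write the right-hand side of \eqref{Matveev ineq} as $\exp(-X)$ with
\[
X=1.4\cdot 30^{n+3}\cdot n^{4.5}\cdot D^{2}(1+\log D)(1+\log T)A_{1}\cdots A_{n}>0 .
\]
Since $n,D\geq 1$, $T\geq\max_{j}|r_{j}|\geq 1$ (as $\Lambda\neq 0$ excludes all $r_{j}=0$), and each $A_{j}\geq 0.16$, one has $X\geq 1.4\cdot 30^{3}(30\cdot 0.16)^{n}>\log 2$, so $\exp(-X)<\tfrac12$; hence \eqref{Matveev ineq} is automatic when $|\Lambda|\geq\tfrac12$, and we may assume $|\Lambda|<\tfrac12$. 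Then $\Lambda=\beta_{1}^{r_{1}}\cdots\beta_{n}^{r_{n}}-1=e^{\Gamma}-1$ gives $\Lambda=0\iff\Gamma=0$, hence $\Gamma\neq 0$; and $|\Lambda|<\tfrac12$ together with the elementary bound $|\log(1+x)|\leq 2|x|$ for $|x|\leq\tfrac12$ yields $|\Gamma|=|\log(1+\Lambda)|\leq 2|\Lambda|$, i.e.\ $|\Lambda|\geq\tfrac12|\Gamma|$. Substituting Matveev's lower bound for $|\Gamma|$ gives $|\Lambda|\geq\tfrac12\exp(-C(n,1)C_{0}W_{0}D^{2}\Omega)=\exp\!\big(-(C(n,1)C_{0}W_{0}D^{2}\Omega+\log 2)\big)$.

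It then suffices to majorize $C(n,1)C_{0}W_{0}D^{2}\Omega+\log 2$ by the exponent $X$ above, which is a purely arithmetical (if fussy) check. The heart of it is that for $\kappa=1$ the factorial $1/n!$ in Matveev's $C(n,\kappa)$ beats $(4(n+1))^{n+1}$ down to size $\approx(4e^{2})^{n}$, and $4e^{2}<30$; together with the slack $30^{n+3}=27000\cdot 30^{n}$ and the fixed power $n^{4.5}$ this comfortably dominates $C(n,1)$ and the polynomial-in-$n$ parts of $C_{0}$ and $W_{0}$, while the $\log$-terms of $C_{0},W_{0}$ --- the only places where $D$ and $B$ enter --- are absorbed into $D^{2}(1+\log D)(1+\log T)$ after using $T\geq B$ and the monotonicity of $1+\log(\cdot)$. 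Carrying this out turns the lower bound for $|\Gamma|$, hence for $|\Lambda|\geq\tfrac12|\Gamma|$, into exactly \eqref{Matveev ineq}. The one genuinely delicate point --- and the one I expect to absorb the work --- is precisely this constant-chasing: the $\log n$ and $\log D$ terms inside $C_{0},W_{0}$ must be controlled uniformly in $n$ and $D$ and funnelled into the clean factors $n^{4.5}$, $D^{2}$, $1+\log D$ without disturbing the overall shape, all while keeping the real case $\kappa=1$ throughout (it is $\kappa=1$ that yields the exponent $30^{n+3}$ rather than a larger one). I would \emph{not} reprove the transcendence core behind the raw estimate on $|\Gamma|$ --- the interpolation-determinant / Baker-method machinery of \cite{16} --- which is used here as a black box.
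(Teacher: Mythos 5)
The paper does not actually prove this lemma: it is quoted verbatim, with references to Matveev \cite{16} and to Bugeaud, Mignotte and Siksek \cite{5}, as a known corollary of Matveev's theorem, so there is no in-paper argument to compare yours against. That said, your outline is a faithful description of how the quoted form is obtained from Matveev's additive estimate. The reduction $|\Lambda|\ge\tfrac12|\Gamma|$ via $|\log(1+x)|\le 2|x|$ for $|x|\le\tfrac12$ (after discarding the trivial range $|\Lambda|\ge\tfrac12$, where the claimed bound holds automatically) is exactly the standard step, the observation that $\Lambda\neq0$ forces some $r_j\neq0$ and hence $T\ge1$ is correct, and your identification of $4e^{2}<30$ as the source of the base $30$ in $30^{n+3}$ in the real case $\kappa=1$ is the right explanation of where the clean constant comes from. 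Two caveats. First, Matveev's $C_{0}=\log\big(e^{4.4n+7}n^{5.5}D^{2}\log(eD)\big)$ is linear in $n$, not logarithmic as you describe it; this is harmless, since that linear factor is absorbed into $n^{4.5}$ together with the slack in $30^{n+3}$, but the description is imprecise. Second, the numerical verification you defer ("a purely arithmetical if fussy check") is essentially the entire content of the corollary, so as a self-contained proof your text remains a sketch; for a black-box lemma of this kind that is an acceptable level of detail, and it matches the paper's own treatment, which is simply to cite the result.
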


Dujella and Pethő \cite{8} proposed a modification of a lemma initially introduced by Baker and Davenport \cite{1}. Later, in \cite{3}, the authors offered an alternative version of the conclusion derived from Dujella and Pethő’s lemma.
Let \( \Vert z \Vert \) denote the distance from a real number \( z \) to the nearest integer, defined as \( \Vert z \Vert = \min \{ \lvert z - n \rvert : n \in \mathbb{Z} \} \).

\begin{lemma}\label{Dujella}
Let $M$ be a positive integer, $\frac{p}{q}$ be a convergent of the continued fraction of the irrational $\gamma$ such that $q>6M$, and let $A,B,\mu$ be real numbers with $A>0$ and $B>1$. If $\varepsilon=\Vert \mu q \Vert-M\Vert \gamma q\Vert>0$, hence there is no solution to the inequality
\begin{align*}
0<\vert u\gamma-v+\mu\vert<AB^{-\omega},
\end{align*}
in positive integers $u,v$ and $\omega$ with
\begin{align*}
u\leq M\quad and \quad \omega\geq \frac{\log(\frac{Aq}{\varepsilon})}{\log B}.
\end{align*}
\end{lemma}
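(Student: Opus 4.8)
The plan is a short proof by contradiction resting only on the defining inequality for a continued-fraction convergent and elementary estimates for the distance to the nearest integer. Suppose, contrary to the statement, that there are positive integers $u,v,\omega$ with $u\leq M$ and $\omega\geq\log(Aq/\varepsilon)/\log B$ for which $0<\vert u\gamma-v+\mu\vert<AB^{-\omega}$. I would multiply this through by $q$ and aim to squeeze $q\vert u\gamma-v+\mu\vert$ between a lower bound equal to $\varepsilon$ and an upper bound strictly below $\varepsilon$; producing those two bounds is the whole argument.

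For the lower bound, first note that since $p/q$ is a convergent of $\gamma$ one has $\vert q\gamma-p\vert<1/q$, and because $q>6M\geq 6$ this is less than $1/2$, so $p$ is the integer nearest to $q\gamma$ and $\Vert q\gamma\Vert=\vert q\gamma-p\vert$. Writing
\begin{align*}
q(u\gamma-v+\mu)=(up-qv)+q\mu+u(q\gamma-p),
\end{align*}
the term $up-qv$ is an integer, while the perturbation obeys $\vert u(q\gamma-p)\vert=u\,\Vert q\gamma\Vert\leq M\,\Vert q\gamma\Vert$ since $0<u\leq M$. Hence the distance of $q(u\gamma-v+\mu)$ to the nearest integer is at least $\Vert q\mu\Vert-M\,\Vert q\gamma\Vert=\varepsilon$, so that $q\vert u\gamma-v+\mu\vert\geq\varepsilon$ (which in passing re-confirms $u\gamma-v+\mu\neq 0$).

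For the upper bound, from $\omega\geq\log(Aq/\varepsilon)/\log B$ and $B>1$ we get $B^{\omega}\geq Aq/\varepsilon$, hence $q\vert u\gamma-v+\mu\vert<qAB^{-\omega}\leq\varepsilon$. The two bounds are incompatible, giving the desired contradiction. The one place that needs care — and the only step I would write out in full — is the triangle-inequality manipulation with the nearest-integer norm: that adjoining the integer $up-qv$ leaves the distance to $\mathbb{Z}$ unchanged, and that the small quantity $u(q\gamma-p)$ can reduce $\Vert q\mu\Vert$ by no more than $M\,\Vert q\gamma\Vert$. The hypotheses $q>6M$ (which legitimizes $\Vert q\gamma\Vert=\vert q\gamma-p\vert$ and keeps the perturbation small) and $\varepsilon>0$ (which keeps the conclusion non-vacuous) are exactly what make this chain work; the rest is bookkeeping.
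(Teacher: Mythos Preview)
Your proof is correct and is precisely the standard Dujella--Peth\H{o} argument. Note, however, that the paper does not actually prove this lemma: it is quoted without proof as a known result, citing \cite{8} and the variant in \cite{3}. So there is no ``paper's own proof'' to compare against; your write-up simply supplies the missing justification, and it matches the original proof in the cited references.
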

\section{ MAIN RESULT}
\begin{theorem} The only positive integer solution to the equation (\ref{1.1}) with $ n, l, m \geq 1$ and $k \geq 3$ is $(n, l, k, m) = (6, 3, 3, 1)$.

\end{theorem}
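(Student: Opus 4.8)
The plan is to follow the standard Baker-type strategy: first bound $n$ (and $m$) in terms of $l$ and $k$ using Matveev's theorem applied to a suitable linear form in logarithms, then use the divisibility inequality (2.1) derived from Lemma 2.2 to bootstrap a bound on $l$ and $k$ alone, and finally reduce the remaining finite search with the Dujella--Peth\H{o} lemma (Lemma \ref{Dujella}) and a direct computation. Concretely, I would start from \eqref{1.1} and the Binet formula of Lemma 2.1, writing $F_n = \frac{\phi^n - (-\phi)^{-n}}{\sqrt 5}$ and $F_l = \frac{\phi^l - (-\phi)^{-l}}{\sqrt 5}$. Substituting and isolating the dominant term on each side, one gets an approximation $\phi^n \approx \sqrt 5\, F_l^{k+m}$ up to a relatively small multiplicative error controlled by $\phi^{-l}$ and by the ratio $F_l^{k}/F_l^{k+m}$. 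This yields a first linear form $\Lambda_1 = \phi^{-n}\cdot \sqrt5\cdot F_l^{\,k+m} - 1$ (or its reciprocal), whose absolute value is bounded above by something like $c\,\phi^{-l}$ or $c\,F_l^{-m}$, whichever is relevant; here the heights are $h(\phi)=\tfrac12\log\phi$, $h(\sqrt5)=\tfrac12\log5$, and $h(F_l)=\log F_l \le (l-1)\log\phi$, and the integer coefficients are bounded by $T = \max\{n, k+m\}$. Matveev's inequality \eqref{Matveev ineq} with $n=3$, $D=2$ then gives a lower bound of the shape $\exp(-C\,(l-1)\log\phi\cdot\log(k+m))$, and comparing the two bounds produces an inequality of the form $\min\{l, m(l-2)\}\log\phi \le C'(l-1)\log(k+m)$.

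The subtlety is that a single linear form does not immediately separate the variables, so I expect to need \emph{two} applications of Matveev. After the first pass one extracts, in the regime where $m$ is large, that $\phi^n$ is very close to $\sqrt5 F_l^{k+m}$; rewriting $\eqref{1.1}$ as $\sqrt5 F_n + F_l^k = F_l^{k+m}$ (approximately) and dividing through by $F_l^{k+m}$ gives a second form $\Lambda_2 = \phi^{n}\cdot 5^{-1/2}\cdot F_l^{-(k+m)} \cdot(\text{correction}) - 1$ in which the coefficient $F_l^{-k}$ appears with exponent comparable to $k$, so that $|\Lambda_2| \ll F_l^{-k} = \phi^{-k(l-2)}$ roughly. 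A second use of \eqref{Matveev ineq} then bounds $k(l-2)\log\phi$ from below by $C''(l-1)\log n \cdot \log(\text{something})$. Combining this with the crucial lower bound \eqref{2.1}, namely $\log n \ge \log l + (k-2)(l-2)\log\phi$, forces $k$ and $l$ to be small: the left-hand side grows like $k\cdot l$ while the right-hand side grows only like $l\log(kl)$ after substitution, so one obtains an absolute bound, say $k \le K_0$ and $l \le L_0$ with explicit (large) constants, and correspondingly an absolute bound on $n$ and $m$.

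With $l$ and $k$ absolutely bounded, the remaining work is computational: for each of the finitely many pairs $(l,k)$ one is left with an inequality $|n\log\phi - (k+m)\log F_l + \log\sqrt5| < A B^{-m}$ (or the analogous form with the roles adjusted), to which Lemma \ref{Dujella} applies with $\gamma = \log F_l/\log\phi$ or $\gamma=\log\phi$ as appropriate, $\mu$ the constant term over the relevant logarithm, $M$ the current bound on $n$ or $k+m$, and $B=\phi$ or $B=F_l$; running the reduction once (or iterating it twice) collapses the bound on $m$ to something tiny, after which a finite direct check over all surviving $(n,l,k,m)$ with small parameters — carried out in Python — leaves only $(6,3,3,1)$, since $F_6 = 8 = 2^3(2^1-1) = F_3^3(F_3^1 - 1)$. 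The main obstacle I anticipate is organizational rather than conceptual: setting up the two linear forms so that the exponents genuinely isolate $l$, $m$ and then $k$, handling the case split $l\equiv 3\pmod 6$ versus $l\not\equiv 3\pmod 6$ that Lemma 2.2 imposes (and the sub-case $2^{k-1}\mid \tfrac nl$ or not), and keeping the Matveev constants small enough that the final search space for the Dujella--Peth\H{o} step stays feasible. A secondary nuisance is the small-$l$ degenerate cases ($l=1,2$, where $F_l=1$ and the equation trivializes or has no solutions) and small $m$ (e.g. $m=1$, where the right side is $F_l^k(F_l-1)$ and one can appeal more directly to known results such as those in \cite{7}), which should be dispatched separately before the main argument.
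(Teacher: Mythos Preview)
Your overall architecture (two Matveev applications, the divisibility input \eqref{2.1}, then Dujella--Peth\H{o} and a machine sweep) matches the paper, but the second linear form is where your plan diverges and would not close as written. You propose a form $\Lambda_2$ with $|\Lambda_2|\ll F_l^{-k}$ and then try to bound $k(l-2)$; however no such bound arises naturally from the equation. What the paper actually does is take the \emph{four}-term form $\Lambda_2=\sqrt5\,\phi^{-n}F_l^{k}(F_l^{m}-1)-1$, treating $F_l^{m}-1$ itself as one of the algebraic numbers, and observes that $|\Lambda_2|<\tfrac{1}{2}\phi^{-n}$ (the only error is the Binet tail $(-\phi)^{-n}$). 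Matveev then gives $n\ll m(l-1)^2(1+\log n)$, because the height $h(F_l^m-1)\le m(l-1)\log\phi$ is where $m$ enters. Combining this with the first form's output $m\ll \log n$ and with \eqref{2.1} in the weak form $(l-2)\log\phi<\log n$ (no case split on $l\bmod 6$ is needed; one just uses the uniform consequence $F_l^{k-2}\mid n/l$) yields $n\ll (\log n)^4$, hence an absolute bound on $n$, and then on $m,l,k$. Your proposed chain --- bounding $k(l-2)$ and then feeding \eqref{2.1} back in --- does not produce a contradiction, since \eqref{2.1} is a \emph{lower} bound on $\log n$ in terms of $k,l$, not an upper bound.

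Separately, your treatment of $m=1$ has a gap: reference \cite{7} only covers $l=3$ (powers of $2$), not $F_n=F_l^{k}(F_l-1)$ for arbitrary $l\ge 3$. The paper handles this case by a short self-contained argument: Catalan's identity gives $(F_l-1)\mid F_{l-2}F_{l-1}F_{l+1}F_{l+2}$, and since $l\mid n$ with $l<n$ one has $l+2\le \tfrac n2+2<n$ for $n>12$, so Carmichael's primitive divisor theorem furnishes a prime factor of $F_n$ dividing none of $F_{l-2},\dots,F_{l+2}$, contradicting \eqref{1.1}; the range $n\le 12$ is then checked by hand. You should replace the appeal to \cite{7} with this (or an equivalent) argument.
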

\begin{proof} Initially, let us explore some particular cases. For example, if we take $l = 1$ or $l = 2$ in the Diophantine equation \eqref{1.1}, we obtain $F_n = 0$, which implies that $n = 0$. So, let us assume that \( l \geq 3 \). If we set $m = 1$ in equation \eqref{1.1}, we obtain the following equation:
\begin{equation}
F_n = F_l^k (F_l - 1)
\end{equation}

It is easy to see that $F_l - 1$ divides $F_{l-2} F_{l-1} F_{l+1} F_{l+2}$. This follows from Catalan's formula \cite{11}, which states that
$F_{l-d}F_{l+d} - F_l^2 = (-1)^{l+d+1}F_d^2$,
where $d$ is a positive integer and $l \geq d$. Thus, in equation (3.1), every prime factor of $F_n$ is a prime factor of $F_l$ or one of $F_{l-2}, F_{l-1}, F_{l+1}, F_{l+2}$. Additionally, $l$ divides $n$ and $l < n$, so $l \leq \frac{n}{2}$.
By Carmichael's primitive divisor theorem \cite{6,19}, if $n \neq 1, 2, 6, 12$, then $F_n$ has at least one primitive prime factor. This means that there is a prime factor of $F_n$ that is not a prime factor of $F_s$ for any positive integer $s < n$. In particular, if $n > 12$, then $l + 2 \leq \frac{n}{2} + 2 < n$, so $F_n$ will have a prime factor that is not a divisor of any of $F_{l-2}, F_{l-1}, F_l, F_{l+1}, F_{l+2}$, which shows that there is no solution to equation (3.1). Finally, it remains to check if equation (3.1) has any solutions for $n \leq 12$. Through straightforward calculations, we determined that equation (3.1) has only two solutions, namely $(n, l, k) = \{ (3, 3, 1), (6, 3, 3) \}$, for $n \leq 12$.

We will proceed with the examination of equation \eqref{1.1} for $l \geq 3$, $m \geq 2$, and $ k \geq 3$. Under these conditions on $l$, $m$, and $k$, we get that $n \geq 9$. We now compare the two sides of equation \eqref{1.1} using Corollary \eqref{Golden ratio ineq}. We have
 \begin{align*}
     \phi^{n-2}\leq \phi^{(l-1)(k+m)}
 \end{align*}
 \begin{align*}
     \phi^{n-1}\geq \phi^{(l-2)k}(\phi^{(l-2)m}-1)
 \end{align*}
which implies that:
\begin{align}
    n\leq 2+(k+m)(l-1).
\end{align}
 
\begin{align}
    n\geq(k+m)(l-2).
\end{align}
 
Using Binet's formula, we can reformulate equation \eqref{1.1} as follows:
\begin{align*}
&\frac{\phi^{n}-(-\phi)^{-n}}{\sqrt{5}}=F_l^{k+m}-F_l^{k},\\
\end{align*}
to get
\begin{align*}
   F_l^{k+m}- \frac{\phi^n}{\sqrt{5}} &=F_l^k-\frac{(-\phi)^{-n}}{\sqrt{5}},
\end{align*}
The right-hand side above is positive. Because \bigg($F_l^k - \frac{(-\phi)^{-n}}{\sqrt{5}}\bigg) \in \left( F_l^k - \frac{1}{2\sqrt{5}}, F_l^k + \frac{1}{2\sqrt{5}} \right)$. Next, dividing both sides of last equation by $F_l^{k+m}$ we obtain
\begin{equation}
 0<1-\phi^{n}(\sqrt 5)^{-1} F_l^{-(k+m)}< F_l^{k}+\frac{1}{2{\sqrt{5}}}<\frac{1.03}{F_l^{m}}.
\end{equation}

 We first apply Lemma 2.3 to the left-hand side of inequality (3.4). We put
$$\beta_{1}=\phi, \beta_{2}=\sqrt{5}, \beta_{3}=F_l\quad and\quad r_{1}=n, r_{2}=-1, r_{3}=-(k+m).$$ 

We thus take 
\begin{align*}
    \Lambda_1 :=\phi^{n}(\sqrt 5)^{-1} F_l^{-(k+m)}-1
\end{align*}
Assuming that \(\Lambda_1 = 0\), we obtain \(\frac{\phi^n}{\sqrt{5}} = F_l^{k+m}\), which implies \(\phi^{2n} \in \mathbb{Q}\). However, from the formula \(\phi^r = \phi F_r + F_{r-1}\) for \(r \geq 0\) \cite{11}, we can easily see that \(\phi^{2n} \notin \mathbb{Q}\). Therefore, \(\Lambda_1 \neq 0\).
Given that \( T \geq \max\{n, 1, k+m\} \) and \( n \geq (k+m)(l-2) \), we can take \( T = n \). Furthermore, since \( \beta_1 \), \( \beta_2 \), and \( \beta_3 \) are elements of the real quadratic number field \( \mathbb{L} = \mathbb{Q}(\sqrt{5}) \), we choose \( D = 2 \).

Then

$$h(\beta_{1})=\frac{1}{2}\log{\phi}, h(\beta_{2})=\frac{1}{2}\log{5}, h(\beta_{3})=\log{F_l},$$

we take

$$A_{1}=\log{\phi}, A_{2}=\log{5}, A_{3}=2\log{F_l}.$$

Additionally, by combining inequalities \eqref{Matveev ineq} and (3.4), we derive:

\begin{align*}
&\frac{1.03}{F_l^{m}}>\exp \bigg(-1.4\cdot 30^{6} \cdot 3^{4.5}\cdot 2^{3} (1+\log{2})(1+\log{(n)})(\log{F_l})\log{\phi}\log{5}\bigg),\\
\end{align*}
brief calculations reveals that
\begin{align*}
&m<1.5\cdot 30^{6} \cdot 3^{4.5}\cdot 2^{3} (1+\log{2})(1+\log{(n)})\log{\phi}\log{5}
\end{align*}
Then, we achieve
\begin{equation}
m<161\cdot 10^{10}(1+\log{(n)}).
\end{equation}

Let us now reformulate equation \eqref{1.1} as follows:

\begin{align*}
&\frac{\phi^{n}-(-\phi)^{-n}}{\sqrt{5}}= F_l^k (F_l^m - 1),\\
\end{align*}
to obtain
\begin{align*}
    \frac{\phi^n}{\sqrt{5}} - F_l^k (F_l^m - 1) &= \frac{(-\phi)^{-n}}{\sqrt{5}}
\end{align*}

Taking the absolute values on both sides of the last equation and performing the necessary calculations leads to the following result:
\begin{align*}
&\bigg\vert \frac{\phi^{n}}{\sqrt{5}}-F_l^k (F_l^m - 1)\bigg\vert < \frac{1}{2{\sqrt{5}}}\\
\end{align*}
Upon dividing both sides of the inequality above by
$\frac{\phi^{n}}{\sqrt{5}}$, we obtain the following

\begin{equation}
   \vert \sqrt 5\phi^{-n} F_l^k (F_l^m - 1)-1\vert< \frac{1}{2\phi^{n}}
\end{equation}
In the following step, we apply Matveev's lemma once again to the left-hand side of inequality (3.6) with: 

$$\beta_{1}=\sqrt{5}, \beta_{2}=\phi, \beta_{3}=F_l,\beta_{3}=F_l^{m}-1\quad and\quad r_{1}=1, r_{2}=-n, r_{3}=k, r_{4}=1.$$ 
We therefore choose
\begin{align*}
    \Lambda_{2} := \sqrt{5} \phi^{-n} F_{l}^{k} (F_{l}^{m}-1)-1
\end{align*} 
Assuming \(\Lambda_{2} = 0\), we have \(\frac{\phi^{n}}{\sqrt{5}} = F_l^k (F_l^m - 1)\), which implies that \(\phi^{2n} \in \mathbb{Q}\), an impossibility. Therefore, \(\Lambda_{2} \neq 0\). Since $T \geq \max\{n, k, 1\}$ and $n \geq (k + m)(l - 2)$, we can take $T = n$. The algebraic real number field containing \(\beta_{1}\), \(\beta_{2}\), \(\beta_{3}\), and \(\beta_{4}\) is \(\mathbb{L} = \mathbb{Q}(\sqrt{5})\), which is quadratic, so we can take \(D = 2\). 

Since

$$h(\beta_{1})=\frac{1}{2}\log({5}),
h(\beta_{2})=\frac{1}{2}\log{\phi}, h(\beta_{3})=\log{F_l},
h(\beta_{4})=\log({F_l^{m}-1})$$

we take

$$A_{1}=\log({5}), A_{2}=\log{\phi}, A_{3}=2(l-1)\log{\phi}, A_{4}=2(l-1)m\log{\phi} $$

Moreover, by integrating inequalities \eqref{Matveev ineq} and (3.6), we arrive at the subsequent inequality:

\begin{align*}
&\frac{1}{2\phi^{n}}>\exp \bigg(-1.4\cdot 30^{7} \cdot 2^{13} (1+\log({2}))(1+\log{(n)})\cdot m(l-1)^2(\log{\phi})^3\log({5})\bigg),
\end{align*}
A brief calculation shows that:
\begin{equation}
n<1.4\cdot 30^{7} \cdot 2^{13} (1+\log({2}))(\log{\phi})^2\log({5})\cdot(l-1)^2(1+\log{(n)})m
  \end{equation}
 
By applying inequality (2.1), we obtain
$(l - 2) \log(\phi) < \log(n)$, and by using this bound for $l$ along with the bound for $m$ in (3.5) within inequality (3.7), we conclude that:
  \begin{equation}
n<1.4\cdot 30^{7} \cdot 2^{13}\cdot 161\cdot10^{10} (1+\log({2}))(\log{\phi})^2\log({5})\left[1+\frac{\log(n)}{\log(\phi)}\right]^2(1+\log{(n)})^2
  \end{equation}
  Revealing that
  \begin{equation}
  n<4.64\cdot10^{34}.
  \end{equation}
 
 Proceeding further, through the application of inequalities (3.5) and (3.9), we ascertain that:
 $$ m<1.61\cdot 10^{12}(1+\log(4.64\cdot10^{34})),$$
 then, we find
$m<1.31\cdot 10^{14}$.
 
 Next, using inequality (2.1) with the conditions
$k>2$ and $l>2$, we derive
\begin{equation}
  \log(l)+(l-2)\log(\phi)<\log(4.64\cdot10^{34}), \,\, \text{implies}\,\, l<158.
  \end{equation}
\begin{equation*}
  (k-2)\log(\phi)<\log(4.64\cdot10^{34}), \,\,\text{implies}\,\, k<168.
  \end{equation*}
  We will now undertake the preparation required to apply the lemmas established by Dujella and Pethő.
 
 Suppose 
 \begin{align*}\Gamma:=n\cdot\log(\phi)-(k+m)\log(F_l)-\log(\sqrt5)\end{align*}
 
Thus
 \begin{align*}
  \Lambda_1 = e^{\Gamma} -1
 \end{align*}

Then

 \begin{align*}
  \vert e^{\Gamma} -1\vert<\frac{1.03}{F_l^{m}}<\frac{1}{2},\, \text{for}\, \, m\geq 2.
 \end{align*}
 Since \( \Gamma < 0 \) and \( \lvert e^\Gamma - 1 \rvert < \frac{1}{2} \), we have \( 1 - e^\Gamma < \frac{1}{2} \), which implies \( e^{-\Gamma} < 2 \).
Here, we will use the fact that \( y < e^y - 1 \) for all \( y \neq 0 \). Then,
 
 \begin{align*}
 0<-\Gamma<e^{-\Gamma}-1=e^{-\Gamma}\vert e^{\Gamma} -1\vert<\frac{2.06}{F_l^{m}}.
  \end{align*}
 Therefore, for all $l\geq3$,
\begin{align}
-\Gamma <\frac{1.03}{2^{m-1}}.
\end{align}

 By dividing both sides of inequality (3.11) by
$\log(\phi)$, we deduce the following expression:
 \begin{align*}
0<(k+m)\frac{\log{(F_l)}}{\log{\phi}}-n+\frac{\log{\sqrt{5}}}{\log{\phi}}<\frac{1.03}{\log{\phi}} 2^{1-m}.
\end{align*}
In the subsequent phase, we will apply Lemma \ref{Dujella} by taking $u=k+m$, $\gamma_{l}=\frac{\log{(F_l)}}{\log{\phi}}$, $v=n$, $\mu = \frac{\log(\sqrt{5})}{\log(\phi)}$, $ A:=\frac{1.03}{\log{\phi}}$,\, $B=:2$ \, and \, $\omega=:m-1$ , to further refine the upper bound on $m$.

Now, let us take $M=167+1.31\cdot10^{14}$. For each of our numbers $l$ where $3\leq l\leq 157$, we take $q:=q(l) $ to be the denominator of the first convergent to $\gamma_l=\frac{\log{(F_l)}}{\log{\phi}}$  such that $q>6M$ and $\varepsilon>0$. Thus, we may aply Lemma \ref{Dujella} for each such $q$, $\gamma_l$ and $\mu$. The minimal value of $\varepsilon$ is greater than $ 1.5\cdot 10^{-28}$. Also the maximal value of $q(l)=q_{75}(154)=25431328747122828658870707509980696460342$.
Then from Lemma \ref{Dujella}, we find that

\begin{align*}
m-1<\frac{\log(\frac{Aq}{\varepsilon})}{\log B}= \frac{\log\left(\frac{1.03\cdot 25431328747122828658870707509980696460342}{\log{\phi}\cdot 1.5\cdot 10^{-28}} \right)}{\log{2}} < 226.1
\end{align*}
which implies that $m\leq 227$. From inequality (3.1) we acquire,
\begin{equation*}
n\leq 2+(k+m)(l-1)\leq 61466.
\end{equation*}
We determine that if quadruples \( (n, l, k, m) \) satisfy equation \eqref{1.1}, then the following inequalities hold true:

\begin{equation}
\begin{cases}
9\leq n\leq 61466,\\
        3\leq l\leq 157,\\
3\leq k\leq 167,\\
2\leq m\leq 227.
\end{cases}
\end{equation}
The upper bound for $n$ is quite large, making it impractical to use a computer search to find solutions to equation (1) under the conditions specified in (3.12). Therefore, we will repeat the entire process that started in (3.10) and ended in (3.12) using new, adjusted upper bounds for $n$, $l$, $k$, and $m$.

Following this, we will once more utilize inequality (2.1) to get:
\begin{equation*}
  \log(l)+(l-2)\log(\phi)< \log(61466), \,\, \text{infers}\,\, l<19.
  \end{equation*}
\begin{equation*}
  (k-2)\log(\phi)< \log(61466), \,\,\text{infers}\,\, k<25.
  \end{equation*}
 Now, we will once again use Lemma \ref{Dujella} with the same $u$, $\gamma_l$, $v$, $\mu$, $ A,$ $B$ and $\omega $. This time, we select $M = 24 + 227=251$. For each integer $l$ in the range $3 \leq l \leq 18$, we let $q := q(l)$ be the denominator of the first convergent to $\gamma_l = \frac{\log(F_l)}{\log \phi}$ such that $q > 6M$ and $\varepsilon > 0$. Consequently, we can apply Lemma \ref{Dujella} for each corresponding $q$, $\gamma_l$, and $\mu$. The minimum value of $\varepsilon$ is greater than $ 0.001274174011265825$, while the maximum value of $ q(l)$ is $q_{6}(16) = 61976$.

Based on Lemma \ref{Dujella}, we can subsequently conclude that
\begin{align*}
m-1 < \frac{\log\left(\frac{1.03 \cdot 61976}{\log\left(\phi\right) \cdot 0.001274174011265825}\right)}{\log(2)}
< 26.7
\end{align*}
Applying inequality (3.3) once more, we obtain
\begin{equation*}
n\leq 2+(k+m)(l-1)\leq 869.
\end{equation*}
We conclude that any possible solutions to equation \eqref{1.1} must fulfill the following inequalities:
\begin{equation}
\begin{cases}
9\leq n\leq 869,\\
        3\leq l \leq 18,\\
3\leq k \leq 24,\\
2\leq m\leq 27.
\end{cases}
\end{equation}

The calculations conducted with the \texttt{Python}  demonstrated that there are no positive integer solutions to equation \eqref{1.1} within the constraints specified in (3.13).

\end{proof}

\section{ CONCLUSION}
 
   In this paper, we have established that \( (n, l, k, m) = (6, 3, 3, 1) \) is the only quadruple, where \( n, l, m \geq 1 \) and \( k \geq 3 \), that satisfies the Diophantine equation \( F_n = F_l^k (F_l^m - 1) \). The cases \( k = 1 \) and \( k = 2 \) require separate investigation. However, we conjecture that, in general, \( (n, l, k, m) = \{(6, 3, 3, 1), (3, 3, 1, 1)\} \) are the only solutions to equation (1.1). Furthermore, future research could explore the following generalized form of this equation:

\begin{openproblem}
      There are only finitely many positive integer five-tuples $(n, a, k, b, m)$ that satisfy the equation $F_n = a^k(b^m - 1)$.
\end{openproblem}

\bigskip
   \textbf{Acknowledgments}
\bigskip

We would like to extend our thanks to Ali Mohammad for his support in developing the Python code required for the calculations in this paper. We also want to thank Prof. Florian Luca for the solution he proposed for equation (3.1).

\end{document}